\begin{document}
\newtheorem{theorem}{Theorem}
\newtheorem{proposition}{Proposition}
\newtheorem{lemma}{Lemma}
\newtheorem{corollary}{Corollary}

\theoremstyle{definition}

\newtheorem{definition}{Definition}
\newtheorem{example}{Example}
\newtheorem{exercise}{Exercise}
\newtheorem{remark}{Remark}

%%%%%%%%%%%%%%%%%%%%%%%%%%%%%%%%%%%
%%%%%%%%%%%%%%%%%%%%%%%%%%%%%%%%%%%%
%
%
\subjclass[2010]{Primary 35J60  Secondary 35J25}
\keywords{Ground state, noncooperative elliptic system, generalized Nehari manifold, variational method}
\thanks{}
\title[Ground state solution...]{Ground state solution of a noncooperative elliptic system}

\author[C. J. Batkam]{Cyril Joel Batkam}
\address{Cyril Joel Batkam 
 \newline
D\'epartement de math\'ematiques,
\newline
Universit\'e de Sherbrooke,
\newline
Sherbrooke, Qu\'ebec, J1K 2R1, CANADA.}
\numberwithin{equation}{section}

\maketitle
\begin{abstract}
        In this paper, we study the existence of a ground state solution, that is, a non trivial solution with least energy, of a noncooperative semilinear elliptic system on a bounded domain. By using the method of the generalized Nehari manifold developed recently by Szulkin and Weth, we prove the existence of a ground state solution when the nonlinearity is subcritical and satisfies a weak superquadratic condition.
\end{abstract}

%
%%%%%%%%%%%%%%%%%%%%%%%%%%%%%%%%%%%%%%%%%%%%%%%%%%%%
%%%%%%%%%%%%%%%%%%%%%%%%%%%%%%%%%%%%%%%%%%%%%%%%%%%%
\section{Introduction}
%%%%%%%%%%%%%%%%%%%%%%%%%%%%%%%%%%%%%%%%%%%%%%%%%%%%
%%%%%%%%%%%%%%%%%%%%%%%%%%%%%%%%%%%%%%%%%%%%%%%%%%%%
In this paper, we are concerned with the following noncooperative elliptic system
\begin{equation*}
    (\mathcal{P}) \,\,\ \left\{
                          \begin{array}{ll}
                            -\Delta u=F_u(x,u,v) , \,\ x\in \Omega & \hbox{} \\ \\
                          \,\,\  \Delta v=F_v(x,u,v) , \,\ x\in\Omega & \hbox{} \\ \\
                            u=v=0 , \,\ x\in \partial\Omega,& \hbox{}
                          \end{array}
                        \right.
\end{equation*}
where $\Omega$ is a bounded smooth domain in $\mathbb{R}^N$ and $F_u$ designates the partial derivative  with respect to $u$ of the nonlinearity $F:\overline{\Omega}\times\mathbb{R}^2\rightarrow\mathbb{R}$. The solutions of such systems are steady state of reaction-diffusion systems which arise in many applications such as Chemistry, Biology, Geology, Physics or Ecology.
It is well known $(\mathcal{P})$ has variational structure, that is, its solutions can be found as critical points of the following functional
\begin{equation*}
    \Phi(u,v):=\int_\Omega \Bigl(\frac{1}{2}|\nabla u|^2- \frac{1}{2}|\nabla v|^2-F(x,u,v)\Bigr)
\end{equation*}
defined on $H_0^1(\Omega)\times H_0^1(\Omega)$ \big(i.e the solutions of the equation $\Phi'(u,v)=0$, where $\Phi'$ is the Fr\'{e}chet derivative of $\Phi$\big). In this paper, we will be interested in the existence of a ground state solution, that is, a non trivial solution which minimizes the energy functional $\Phi$. Let us recall that ground state solutions play an important role in applications. For instance, in the study of the formation of spacial patterns in various reaction-diffusion systems, the solutions of the system often converge to a ground state of a simplified semilinear elliptic system, as time tends to infinity (see \cite{K}).
\par In recent years, the existence of ground state solutions of elliptic equations and systems has been widely study, and many interesting results have been obtained (see for instance \cite{K, Mo, S-W, Yang, Chen-Ma, Sch} and the references therein). In (\cite{S-W}, chapter $3$), the authors presented the well known method of the Nehari manifold in a unified way, which can be applied to find ground state solutions of the following elliptic system of cooperative type:
\begin{equation*}
   \left\{ \begin{array}{ll}
                            -\Delta u=F_u(x,u,v), \,\ x\in\Omega & \hbox{} \\ \\
                            -\Delta v=F_v(x,u,v), \,\ x\in \Omega & \hbox{} \\ \\
                            u=v=0 , \,\ x\in \partial\Omega.& \hbox{}
                          \end{array}
                        \right.
\end{equation*}
However, there appears to be no result in the noncooperative case.
\par Let us now introduce the precise assumptions on the nonlinearity $F$ under which our problem is studied:
\begin{itemize}
  \item [$(F_1)$] $F\in\mathcal{C}^1(\overline{\Omega}\times\mathbb{R}^2,\mathbb{R})$ and $F(x,0)=0$ for every $x$ in $\overline{\Omega}$, and $0\in\mathbb{R}^2$.\\
  \item [$(F_2)$] $|\nabla F(x,u)|\leq a\big(1+|u|^{p-1}\big)$, for some $p\in(2,2^\star)$, $x\in\Omega$, $u=(u_1,u_2)\in\mathbb{R}^2$, where $2^\star:=2N/(N-2)$ if $N\geq3$ and $2^\star:=\infty$ if $N=1,2.$ \\
  \item [$(F_3)$]  $F(x,u)=\circ(|u|^2)$ as $|u|\rightarrow0$, uniformly in $x$. \\
  \item [$(F_4)$] $\frac{F(x,u)}{|u|^2}\rightarrow\infty$ as $|u|\rightarrow\infty$, uniformly in $x$.\\
  \item [$(F_5)$] $F(x,u)>0$ and $u\cdot\nabla F(x,u)>2F(x,u)$, $\forall u\in\mathbb{R}^2\backslash\{0\}$.\\
  \item [$(F_6)$] $\big(v\cdot\nabla F(x,u)\big)(u\cdot v)\geq0$, $\forall v\in\mathbb{R}^2$.\\
  \item [$(F_7)$] If $|u|=|v|$, then $F(x,u)=F(x,v)$ and $v\cdot\nabla F(x,u)\leq u\cdot\nabla F(x,u)$, with strict inequality if in addition $u\neq v$.\\
  \item [$(F_8)$] $|u|\neq |v|$ and $u\cdot v\neq0$ $\Rightarrow$ $v\cdot \nabla F(x,u)\neq u\cdot\nabla F(x,v)$.
\end{itemize}
Where we write $F(x,u)=\circ(|u|^2)$ as $|u|\rightarrow0$ to mean that $\lim\limits_{|u|\rightarrow0}\frac{F(x,u)}{|u|^2}=0$. $\nabla F(x,u)$ denotes the gradient of $F$ with respect to $u$ and $u\cdot v$ is the usual inner product in $\mathbb{R}^2$. A simple example of a nonlinearity satisfying these conditions is $F(x,u)=f(x)|u|^p$, where $2<p<2^\star $ and $f>0$ is of class $\mathcal{C}^1$ on $\overline{\Omega}$. \\
\par The main result of this paper is the following:
\begin{theorem}\label{maintheorem}
Under assumptions $(F_1)-(F_8)$, $(\mathcal{P})$ has a ground state solution.
\end{theorem}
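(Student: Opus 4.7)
The plan is to adapt the generalized Nehari manifold method of Szulkin--Weth to this strongly indefinite noncooperative setting. I would work in the Hilbert space $E := H_0^1(\Omega)\times H_0^1(\Omega)$ with norm $\|(u,v)\|^2 = \|\nabla u\|_2^2 + \|\nabla v\|_2^2$, and use the orthogonal splitting $E = E^+ \oplus E^-$ with $E^+ := H_0^1(\Omega)\times\{0\}$ and $E^- := \{0\}\times H_0^1(\Omega)$, so that the quadratic part of $\Phi$ is positive on $E^+$ and negative on $E^-$. Under $(F_1)$--$(F_3)$, $\Phi\in C^1(E,\mathbb{R})$, critical points of $\Phi$ are weak solutions of $(\mathcal{P})$, and $\Phi$ has a local linking/mountain-pass geometry near $0$ in $E^+$ via $(F_3)$ and the Sobolev embedding.

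For each $w^+\in E^+\setminus\{0\}$ I would then study the restriction of $\Phi$ to the generalized ray $\hat E(w^+) := \mathbb{R}_+ w^+ \oplus E^-$. Conditions $(F_3)$--$(F_4)$ show that $\Phi|_{\hat E(w^+)}$ is bounded above and tends to $-\infty$ as $\|w^-\|+|t|\to\infty$ or as $t\to 0^+$ with $w^-$ bounded, so a maximizer $m(w^+)=t(w^+)w^+ + v(w^+)$ exists. Uniqueness requires a strict-concavity-type argument along generalized rays through $m(w^+)$, and this is exactly what the structural assumptions $(F_5)$--$(F_8)$ are designed to provide: $(F_5)$ is the standard Nehari superquadratic inequality, $(F_6)$ yields a monotonicity needed to compare $F(x,\cdot)$ across different copies of $\hat E(w^+)$, while $(F_7)$ (the reflection/symmetry inequality in the $\mathbb{R}^2$-variable) and $(F_8)$ rule out the degenerate cases that occur in the noncooperative setting where $u$ and $v$ may have equal or dependent components.

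Once $m:E^+\setminus\{0\}\to E$ is well defined, I would introduce the generalized Nehari manifold
\[\mathcal{N} := \bigl\{w\in E\setminus E^- : \Phi'(w)w = 0 \text{ and } \Phi'(w)z = 0 \text{ for all } z\in E^-\bigr\},\]
verify that $\mathcal{N}$ equals the image of $m$, and show that $\hat m:S^+\to\mathcal{N}$, $w^+\mapsto m(w^+)$, is a homeomorphism from the unit sphere $S^+\subset E^+$. The reduced functional $\Psi := \Phi\circ\hat m\in C^1(S^+,\mathbb{R})$ then enjoys the Szulkin--Weth correspondence: its (constrained) critical points on $S^+$ are in bijection with the nontrivial critical points of $\Phi$, and $c := \inf_\mathcal{N}\Phi = \inf_{S^+}\Psi > 0$ thanks to $(F_3)$ and the Sobolev inequality.

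The final step is to extract a minimizer. Applying Ekeland's variational principle to $\Psi$ on $S^+$ yields a sequence $\{w_n^+\}\subset S^+$ with $\Psi(w_n^+)\to c$ and $\Psi'(w_n^+)\to 0$; pulling back through $\hat m$ gives a Palais--Smale sequence for $\Phi$ on $\mathcal{N}$ at level $c$. Condition $(F_4)$ combined with $(F_5)$ forces this sequence to be bounded in $E$, and the compact embedding $H_0^1(\Omega)\hookrightarrow L^p(\Omega)$ for $p<2^\star$ (available since $\Omega$ is bounded and the nonlinearity is subcritical by $(F_2)$) produces a strongly convergent subsequence whose limit is a nontrivial critical point of $\Phi$ with $\Phi = c$, i.e.\ a ground state. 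The main obstacle will be the unique-maximizer statement on the infinite-dimensional generalized ray $\hat E(w^+)$: in the original Szulkin--Weth framework $E^-$ is often finite-dimensional, whereas here both $E^\pm$ are infinite-dimensional, and this is precisely why the nontrivial pointwise inequalities $(F_6)$--$(F_8)$ on $F$ as a function of $u\in\mathbb{R}^2$ are imposed.
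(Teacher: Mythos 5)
Your proposal follows essentially the same route as the paper: the same splitting $X^\pm=H_0^1(\Omega)\times\{0\}$ and $\{0\}\times H_0^1(\Omega)$, the same generalized Nehari manifold and reduction $\Psi=\Phi\circ\widehat{m}$ on $S^+$, with $(F_5)$--$(F_8)$ supplying uniqueness of the maximizer on $\widehat{X}(w)$, and Ekeland's principle plus the Palais--Smale condition on $\mathcal{M}$ producing the minimizer. The only (inessential) slip is that as $t\to 0^+$ the restricted functional does not tend to $-\infty$; it is merely nonpositive on $X^-$, which together with $\Phi(tw^+)>0$ for small $t$ (from $(F_3)$) still forces the supremum to be attained at an interior point with $u_0^+\neq 0$.
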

\par We point out here that the energy functional $\Phi$ associated to $(\mathcal{P})$ is strongly indefinite, in the sense that the negative and positive eigenspaces of its quadratic part are both infinite-dimensional. Therefore, the set
$$\mathcal{N}:=\big\{z\in H_0^1(\Omega)\times H_0^1(\Omega)\, \big|\, z\neq0 \, \textnormal{and }\big<\Phi'(z),z\big>=0\big\}$$
need not be closed (since $\inf_\mathcal{N}\Phi$ can be $0$), and Theorem \ref{maintheorem} cannot be proved by using the usual method of the Nehari manifold (see \cite{S-W}, chapter $3$ for a description and some applications of this method). To circumvent the difficulty posed by the strongly indefiniteness of $\Phi$, we will use the method of the generalized Nehari manifold inspired by Pankov \cite{Pan}, and developed recently by Szulkin and Weth \cite{S-W}, which consists in a reduction into two steps.
\par We organize the paper in the following way: In section \ref{section1}, the method of the generalized Nehari manifold is briefly presented while in section \ref{section2}, the existence of a ground state solution is proved.
%%%%%%%%%%%%%%%%%%%%%%%%%%%%%%%%%%%%%%%%%%%%%%%%%%%%
%%%%%%%%%%%%%%%%%%%%%%%%%%%%%%%%%%%%%%%%%%%%%%%%%%%%
%

 %%%%%%%%%%%%%%%%%%%%%%%%%%%%%%%%%%%%%%%%%%%%%%%%%%%%%%%%%%%%%%%%%%%%%%
%%%%%%%%%%%%%%%%%%%%%%%%%%%%%%%%%%%%%%%%%%%%%%%%%%%%%%%%%%%%%%%%%%%%%%
\section{The method of the generalized Nehari manifold}\label{section1}
%%%%%%%%%%%%%%%%%%%%%%%%%%%%%%%%%%%%%%%%%%%%%%%%%%%%%%%%%%%%%%%%%%%%%
%%%%%%%%%%%%%%%%%%%%%%%%%%%%%%%%%%%%%%%%%%%%%%%%%%%%%%%%%%%%%%%%%%%%%%

Let $X$ be a Hilbert space with norm $\|\cdot\|$, and an orthogonal decomposition $X=X^+\oplus X^-$. We denote by $S^+$ the unit sphere in $X^+$; that is,
\begin{equation*}
    S^+:=\big\{u\in X^+\,\big|\,\|u\|=1\big\}.
\end{equation*}
For $u=u^+ + u^-\in X$, where $u^\pm\in X^\pm$, we define
\begin{equation}\label{}
    X(u):=\mathbb{R}u\oplus X^-\equiv\mathbb{R}u^+\oplus X^- \,\,\, \textnormal{and}\,\,\, \widehat{X}(u):=\mathbb{R}^+u\oplus X^-\equiv\mathbb{R}^+u^+\oplus X^-,
\end{equation}
where $\mathbb{R}v:=\{\lambda v\,;\,\lambda\in\mathbb{R}\}$ and $\mathbb{R}^+v:=\{\lambda v \,;\,\lambda\geq0\}$ for $v\in X$.
\par Let $\Phi$ be a $\mathcal{C}^1-$functional defined on $X$ by
\begin{equation*}
    \Phi(u):=\frac{1}{2}\|u^+\|^2-\frac{1}{2}\|u^-\|^2-I(u).
\end{equation*}

We consider the following situation:

\begin{enumerate}
  \item [$(A_1)$]  $I(0)=0$, $\frac{1}{2}\big<I'(u),u\big>>I(u)>0$ for all $u\neq0$, and $I$ is weakly lower semicontinuous.
  \item [$(A_2)$] For each $w\in X\backslash X^-$ there exists a unique nontrivial critical point $\widehat{m}(w)$ of $\Phi|_{\widehat{X}(w)}$. Moreover, $\widehat{m}(w)$ is the unique global maximum of $\Phi|_{\widehat{X}(w)}$.
  \item [$(A_3)$] There exists $\delta>0$ such that $\|\widehat{m}(w)^+\|\geq\delta$ for all $w\in X\backslash X^-$, and for each compact subset $\mathcal{K}\subset X\backslash X^-$ there exists a constant $C_\mathcal{K}$ such that $\|\widehat{m}(w)\|\leq C_\mathcal{K}$.
\end{enumerate}
\par We consider the following set introduced by Pankov \cite{Pan}:
\begin{equation*}
    \mathcal{M}:=\big\{u\in X\backslash X^-: \big<\Phi'(u),u\big>=0 \,\, \textnormal{and} \,\, \big<\Phi'(u),v\big>=0\,\, \forall v\in X^-\big\}.
\end{equation*}
Following Szulkin and Weth \cite{S-W}, we will call $\mathcal{M}$ the generalized Nehari manifold.
\begin{remark}
 \textnormal{By $(A_1)$ $\mathcal{M}$ contains all nontrivial critical points of $\Phi$ and by $(A_2)$ $\widehat{X}(w)\cap\mathcal{M}=\{\widehat{m}(w)\}$ whenever $w\in X\backslash X^-$.}
 \end{remark}

\par In the following we consider the mappings:
\begin{equation*}
    \widehat{m}:X\backslash X^-\rightarrow \mathcal{M}, \, w\mapsto \widehat{m}(w)\,\,\,\, \textnormal{and }\,\,\, m:=\widehat{m}|_{S^+}.
\end{equation*}
The following results are due to A. Szulkin and T. Weth \big(\cite{S-W}, Chapter $4$\big). For reader's convenience we provide the proofs here.
\begin{proposition}\label{prop1}
If $(A_1),$ $(A_2)$ and $(A_3)$ are satisfied, then
\begin{itemize}
  \item [(a)] $\widehat{m}$ is continuous,
  \item [(b)] $m$ is a homeomorphism between $S^+$ and $\mathcal{M}$.
\end{itemize}
\end{proposition}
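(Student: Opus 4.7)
The plan for part (a) is to fix a sequence $w_n \to w$ in $X \setminus X^-$ and show strong convergence $\widehat{m}(w_n) \to \widehat{m}(w)$. Writing $u_n := \widehat{m}(w_n) = s_n w_n^+ + u_n^-$ with $s_n > 0$ and $u_n^- \in X^-$, I will apply $(A_3)$ to the compact set $\mathcal{K} := \{w_n : n \in \mathbb{N}\} \cup \{w\}$ to conclude that $(u_n)$ is bounded, and combine this with the lower bound $\|u_n^+\| = s_n \|w_n^+\| \geq \delta$ to see that $s_n$ is bounded away from $0$ and $\infty$. Passing to a subsequence I may assume $s_n \to s > 0$ and $u_n^- \rightharpoonup v^-$, whence $u_n \rightharpoonup u := s w^+ + v^- \in \widehat{X}(w) \setminus \{0\}$.

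To identify $u$ with $\widehat{m}(w)$, I will test against an arbitrary $z = t w^+ + z^- \in \widehat{X}(w)$ via the competitor $z_n := t w_n^+ + z^- \in \widehat{X}(w_n)$: since $z_n \to z$ strongly we have $\Phi(z_n) \to \Phi(z)$, and the global maximum property $(A_2)$ gives $\Phi(u_n) \geq \Phi(z_n)$. Combining strong convergence $u_n^+ \to u^+$, weak lower semicontinuity of $\|\cdot\|^2$ on $X^-$, and weak lower semicontinuity of $I$ from $(A_1)$,
\begin{equation*}
\Phi(z) = \lim_n \Phi(z_n) \leq \limsup_n \Phi(u_n) \leq \tfrac{1}{2}\|u^+\|^2 - \tfrac{1}{2}\|u^-\|^2 - I(u) = \Phi(u),
\end{equation*}
so the uniqueness in $(A_2)$ forces $u = \widehat{m}(w)$.

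The main obstacle will be upgrading this to strong convergence of the $X^-$ component. For this I plan to plug in the specific competitor $u_n^+ + u^- \in \widehat{X}(w_n)$, whose image under $\Phi$ converges to $\Phi(u)$; together with $\Phi(u_n) \geq \Phi(u_n^+ + u^-)$ and the reverse inequality $\limsup_n \Phi(u_n) \leq \Phi(u)$ obtained above, this gives $\Phi(u_n) \to \Phi(u)$. Since $\|u_n^+\|^2 \to \|u^+\|^2$, the relation reduces to $\tfrac{1}{2}\|u_n^-\|^2 + I(u_n) \to \tfrac{1}{2}\|u^-\|^2 + I(u)$, and a standard subsequence argument, using that each summand is individually bounded below in the limit inferior by the corresponding limit value, forces both to converge separately. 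Hence $\|u_n^-\| \to \|u^-\|$; combined with the weak convergence $u_n^- \rightharpoonup u^-$ in the Hilbert space $X^-$, this yields $u_n^- \to u^-$ strongly, and so $u_n \to u$.

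For part (b), continuity of $m = \widehat{m}|_{S^+}$ is immediate from (a). My candidate inverse is $\iota: \mathcal{M} \to S^+$, $u \mapsto u^+/\|u^+\|$, which is well-defined and continuous because every $u \in \mathcal{M}$ satisfies $u = \widehat{m}(u)$ (by the remark $\widehat{X}(u) \cap \mathcal{M} = \{\widehat{m}(u)\}$), hence $\|u^+\| \geq \delta$ by $(A_3)$. Using the scale invariance $\widehat{m}(u) = \widehat{m}(u^+/\|u^+\|)$ I obtain $m \circ \iota = \mathrm{id}_\mathcal{M}$, and for $w \in S^+$ the positive part of $m(w)$ lies on the ray $\mathbb{R}^+ w$, so $\iota \circ m = \mathrm{id}_{S^+}$; this completes the homeomorphism.
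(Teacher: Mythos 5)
Your argument is correct and follows essentially the same route as the paper's (Szulkin--Weth) proof: boundedness of $\widehat{m}(w_n)$ from $(A_3)$, extraction of weak limits, the maximality property in $(A_2)$ combined with weak lower semicontinuity of the norm and of $I$ to sandwich the energies and upgrade weak to strong convergence of the $X^-$ component, and the explicit inverse $u\mapsto u^+/\|u^+\|$ for (b). If anything, your identification of the weak limit by testing against arbitrary competitors $tw^+ + z^-$ is slightly cleaner than the paper's write-up, which labels the limit $sw+v$ as $\widehat{m}(w)$ before the chain of inequalities actually justifies that identification.
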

\begin{proof}
\item[(a)] Let $(w_n)\subset X\backslash X^-$ such that $w_n\rightarrow w\notin X^- $. We want to show that $\widehat{m}(w_n)\rightarrow \widehat{m}(w)$. Since $\widehat{m}(w_n)=\widehat{m}(w_n^+/\|w_n^+\|)$, we may assume without loss of generality that $w_n\in S^+$. Therefore, it suffices to show that $\widehat{m}(w_n)\rightarrow \widehat{m}(w)$ after passing to a subsequence. Write $\widehat{m}(w_n)=s_nw_n+v_n$, with $s_n\geq0$ and $v_n\in X^-$. By $(A_3)$, the sequence $(\widehat{m}(w_n))$ is bounded. So taking a subsequence, we have $s_n\rightarrow s$ and $v_n\rightharpoonup v$. Setting $\widehat{m}(w)=sw+v$, it follows from $(A_2)$ that
    \begin{equation*}
        \Phi(\widehat{m}(w_n))\geq \Phi(s_nw_n+v)\rightarrow\Phi(sw+v)=\Phi(\widehat{m}(w))
    \end{equation*}
    and hence, using the weak lower semicontinuity of the norm and $I$,
\begin{align*}
    \Phi(\widehat{m}(w))&\leq\lim\limits_{n\rightarrow\infty}\Phi(\widehat{m}(w_n)) = \lim\limits_{n\rightarrow\infty}\Big(\frac{1}{2}s_n^2-\frac{1}{2}\|v_n\|^2-I(\widehat{m}(w_n))\Big) \\
       &\leq \frac{1}{2}s^2-\frac{1}{2}\|v\|^2-I(sw+v)\leq \Phi(\widehat{m}(w)).
\end{align*}
  Hence the inequalities above must be equalities. It follows that $(v_n)$ is strongly convergent, and so $v_n\rightarrow v$. Hence $\widehat{m}(w_n)=s_nw_n+v_n\rightarrow sw+v=\widehat{m}(w)$.
  \item[(b)] It is easy to see that $m$ is a bijection whose inverse $m^{-1}$ is given by
  \begin{equation*}
    m^{-1}(u)=\frac{u^+}{\|u^+\|},\quad \forall u\in\mathcal{M}.
  \end{equation*}
  Since $m^{-1}$ is clearly continuous, we then deduce from (a) that $m$ is a homeomorphism between $S^+$ and $\mathcal{M}$. \medskip
\end{proof}
\par Let
\begin{equation*}
    \widehat{\Psi}:X^+\backslash\{0\}\rightarrow \mathbb{R},\, \widehat{\Psi}(w):=\Phi(\widehat{m}(w)) \,\, \textnormal{ and }\,\, \Psi:=\widehat{\Psi}|_{S^+}.
\end{equation*}

\begin{proposition}\label{prop2}
Under assumptions $(A_1),$ $(A_2)$ and $(A_3)$, $\widehat{\Psi}$ is of class $\mathcal{C}^1$ and
\begin{equation*}
    \big<\widehat{\Psi}'(w),z\big>=\frac{\|\widehat{m}(w)^+\|}{\|w\|}\big<\widehat{\Phi}'(w),z\big>,\,\, \textnormal{for all}\,\,w,z\in X^+,\,\, w\neq0.
\end{equation*}
\end{proposition}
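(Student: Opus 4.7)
The plan is to establish the formula by a sandwich argument exploiting the maximization property $(A_2)$ of $\widehat m$, and then to promote Gateaux differentiability to class $\mathcal C^1$ via the continuity of $\widehat m$ proved in Proposition~\ref{prop1}(a). The right-hand side should be read as $\langle \Phi'(\widehat m(w)),z\rangle$, since this is the only natural object available given the paper's setup.

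Fix $w\in X^+\setminus\{0\}$ and a test direction $z\in X^+$. For $|t|$ sufficiently small, $w+tz\neq 0$, so I decompose $\widehat m(w)=sw+v$ and $\widehat m(w+tz)=s_t(w+tz)+v_t$ with $v,v_t\in X^-$; assumption $(A_3)$ forces $s,s_t>0$. The interpolating vectors $s(w+tz)+v$ and $s_tw+v_t$ lie respectively in $\widehat X(w+tz)$ and $\widehat X(w)$, so the maximization property $(A_2)$ produces the sandwich
\begin{equation*}
\Phi(s(w+tz)+v)-\Phi(sw+v) \;\leq\; \widehat\Psi(w+tz)-\widehat\Psi(w) \;\leq\; \Phi(s_t(w+tz)+v_t)-\Phi(s_tw+v_t).
\end{equation*}
Applying the mean value theorem to each end rewrites it as $t\langle \Phi'(\xi),\sigma z\rangle$ for an intermediate point $\xi$ on the corresponding segment and the associated scalar $\sigma\in\{s,s_t\}$. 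Dividing by $t>0$ (and reversing the chain for $t<0$) and letting $t\to 0$, the continuity statement of Proposition~\ref{prop1}(a) gives $s_t\to s$, $v_t\to v$, hence $\xi\to \widehat m(w)$; both bounds then converge to $s\langle \Phi'(\widehat m(w)),z\rangle$. Since $\widehat m(w)^+=sw$, so $s=\|\widehat m(w)^+\|/\|w\|$, this yields exactly the announced expression for the directional derivative.

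To upgrade to $\mathcal C^1$, note that $z\mapsto s\langle\Phi'(\widehat m(w)),z\rangle$ is a continuous linear functional on $X^+$, and combining the continuity of $\widehat m$ with the continuity of $\Phi'\colon X\to X^*$ shows that $w\mapsto \widehat\Psi'(w)$ is norm-continuous into $(X^+)^*$; this gives Fr\'echet differentiability of class $\mathcal C^1$ on $X^+\setminus\{0\}$. The delicate step is the sandwich itself: one must verify that the interpolating vectors genuinely lie in the positive cones $\widehat X(w+tz)$ and $\widehat X(w)$, rather than merely in $X(w+tz)$ and $X(w)$. This is precisely where $(A_3)$ (via $s,s_t>0$) and the uniqueness clause of $(A_2)$ are essential; handling $t<0$ is then routine and only flips the inequalities.
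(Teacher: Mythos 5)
Your proof is correct and follows essentially the same route as the paper: the same two-sided comparison using the maximality property in $(A_2)$ (comparing $\widehat m(w+tz)$ against $s_w(w+tz)+v_w$ and $\widehat m(w)$ against $s_{w+tz}w+v_{w+tz}$), the mean value theorem, the continuity of $w\mapsto(s_w,v_w)$ from Proposition \ref{prop1}(a), and the passage from a continuous Gateaux derivative to class $\mathcal C^1$. Your reading of the right-hand side as $\langle\Phi'(\widehat m(w)),z\rangle$ also matches what the paper's proof actually establishes.
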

\begin{proof}
Let $w\in X^+\backslash\{0\}$, $z\in X^+$ and put $\widehat{m}(w)=s_ww+v_w$, $v_w\in X^-$. Using the maximality property of $\widehat{m}(w)$ given by $(A_2)$ and the mean value theorem, we obtain
\begin{align*}
    % \nonumber to remove numbering (before each equation)
  \widehat{\Psi}(w+tz)-\widehat{\Psi}(w) &= \Phi(s_{w+tz}(w+tz)+v_{w+tz})-\Phi(s_ww+v_w) \\
   &\leq \Phi(s_{w+tz})(w+tz)+v_{w+tz})-\Phi(s_{w+tz})w+v_{w+tz}) \\
   &= \big<\Phi'(s_{w+tz}w+v_{w+tz}+\tau_ts_{w+tz}tz),s_{w+tz}tz\big>,
\end{align*}
where $|t|$ is small enough and $\tau_t\in(0,1)$. Similarly,
\begin{align*}
% \nonumber to remove numbering (before each equation)
  \widehat{\Psi}(w+tz)-\widehat{\Psi}(w) &\geq  \Phi(s_{w}(w+tz)+v_{w})-\Phi(s_ww+v_w)\\
   &=  \big<\Phi'(s_ww+v_w+\eta_ts_wtz),s_wtz\big>,
\end{align*}
where $\eta_t\in(0,1)$. Since the mappings $w\mapsto s_w$ and $w\mapsto v_w$ are continuous according to Proposition \ref{prop1}, we see by combining these two inequalities that
\begin{align*}
% \nonumber to remove numbering (before each equation)
  \big<\widehat{\Psi}'(w),z\big> &= \lim\limits_{t\rightarrow0}\frac{\widehat{\Psi}(w+tz)-\widehat{\Psi}(w)}{t}\\
   &=  s_w\big<\Phi'(s_ww+v_w),z\big>=\frac{\|\widehat{m}(w)^+\|}{\|w\|}\big<\Phi'(\widehat{m}(w)),z\big>.
\end{align*}
Hence the G\^{a}teaux derivative of $\widehat{\Psi}$ is bounded linear in $z$ and continuous in $w$. It follows from Proposition $1.3$ in \cite{W} that $\widehat{\Psi}$ is of class $\mathcal{C}^1$.\medskip
\end{proof}
\par Before giving a consequence of the previous propositions, which is the main result of this section, we recall some definitions.
\begin{definition}
Let $\varphi\in\mathcal{C}^1(X,\mathbb{R})$.
\begin{enumerate}
  \item A sequence $(u_n)\subset X$ is a Palais-Smale sequence \big(resp. a Palais-Smale sequence at level $c\in\mathbb{R}$\big) for $\varphi$ if $(\varphi(u_n))$ is bounded \big(resp. $\varphi(u_n)\rightarrow c$\big) and $\varphi'(u_n)\rightarrow0$ as $n\rightarrow\infty$.
  \item We say that $\varphi$ satisfies the Palais-Smale condition \big(resp. the Palais-Smale condition at level $c$\big) if every Palais-Smale sequence \big(resp. every Palais-Smale sequence at level $c$\big) has a convergent subsequence.
\end{enumerate}
\end{definition}
\begin{corollary}\label{reduction}
Assume that $(A_1),$ $(A_2)$ and $(A_3)$ are satisfied. Then:
\begin{itemize}
  \item [(a)] $\Psi\in\mathcal{C}^1(S^+,\mathbb{R})$ and
  \begin{equation*}
    \big<\Psi'(w),z\big>=\|m(w)^+\|\big<\Phi'(m(w)),z\big> \textnormal{ for all }z\in T_w(S),
  \end{equation*}
  where $T_w(S)$ is the tangent space of $S$ at $w$.
  \item [(b)] If $(w_n)$ is a Palais-Smale sequence for $\Psi,$ then $(m(w_n))$ is a Palais-Smale sequence for $\Phi$. If $(u_n)\subset\mathcal{M}$ is a bounded Palais-Smale sequence for $\Phi$, then $(m^{-1}(w_n))$ is a Palais-Smale sequence for $\Psi$.
  \item [(c)] $w$ is a critical point of $\Psi$ if and only if $m(w)$ is a nontrivial critical point of $\Phi$. Moreover, the corresponding critical values coincide and $inf_{S^+}\Psi=inf_\mathcal{M}\Phi$.
\end{itemize}
\end{corollary}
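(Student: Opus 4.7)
My plan is to chain the identifications already established: Proposition~\ref{prop1} realises $m\colon S^+\to\mathcal{M}$ as a homeomorphism, Proposition~\ref{prop2} computes the derivative of $\widehat{\Psi}$, and the definition of the generalized Nehari manifold forces $\Phi'(u)$ to vanish on $\mathbb{R}u^+\oplus X^-$ whenever $u\in\mathcal{M}$. The hinge I would establish first is the following orthogonality observation: if $u=m(w)$ with $w\in S^+$, then $X$ splits orthogonally as $\mathbb{R}w\oplus T_wS^+\oplus X^-$, and $\Phi'(u)$ already vanishes on the first and third summands (vanishing on $\mathbb{R}w$ comes from combining $\langle\Phi'(u),u\rangle=0$ with $\langle\Phi'(u),u^-\rangle=0$). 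Consequently the full dual norm $\|\Phi'(u)\|_{X^*}$ is realised on test vectors in $T_wS^+$ alone.

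Part (a) is then a formality: since $S^+$ is a smooth submanifold of $X^+$ and $\widehat{\Psi}\in\mathcal{C}^1(X^+\setminus\{0\})$ by Proposition~\ref{prop2}, the restriction $\Psi=\widehat{\Psi}|_{S^+}$ is automatically of class $\mathcal{C}^1$, and the stated formula is obtained by specialising the identity in Proposition~\ref{prop2} to $\|w\|=1$ and to $z\in T_wS^+$.

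For (b), suppose $(w_n)\subset S^+$ is Palais--Smale for $\Psi$. Then $\Phi(m(w_n))=\Psi(w_n)$ stays bounded (or converges to the prescribed level), while for any $z\in T_{w_n}S^+$ with $\|z\|\le 1$ the formula in (a) together with the uniform lower bound $\|m(w_n)^+\|\ge\delta$ from $(A_3)$ gives $|\langle\Phi'(m(w_n)),z\rangle|\le\delta^{-1}\|\Psi'(w_n)\|$; by the hinge this bounds $\|\Phi'(m(w_n))\|_{X^*}$ and drives it to zero. For the converse, given a bounded Palais--Smale sequence $(u_n)\subset\mathcal{M}$ for $\Phi$, set $w_n=m^{-1}(u_n)=u_n^+/\|u_n^+\|$; then $\Psi(w_n)=\Phi(u_n)$ is bounded, and the same formula combined with the upper bound on $\|u_n^+\|$ that comes from the boundedness of $(u_n)$ forces $\|\Psi'(w_n)\|\to 0$.

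Part (c) then drops out. A point $w\in S^+$ is critical for $\Psi$ iff $\langle\Phi'(m(w)),z\rangle=0$ for every $z\in T_wS^+$ (using $\|m(w)^+\|\ge\delta$), which by the hinge and $m(w)\in\mathcal{M}$ is equivalent to $\Phi'(m(w))=0$; nontriviality of $m(w)$ is already built into $(A_3)$. Conversely, any nontrivial critical point $u$ of $\Phi$ lies in $\mathcal{M}$ by the remark following its definition, so $u=m(u^+/\|u^+\|)$. The identity $\Psi(w)=\Phi(m(w))$ gives equality of critical values, and the bijectivity of $m$ gives $\inf_{S^+}\Psi=\inf_{\mathcal{M}}\Phi$. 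I do not anticipate any genuine obstacle; the only point requiring care is the hinge observation, after which the three assertions are direct substitutions.
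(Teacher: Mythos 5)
Your proposal is correct and follows essentially the same route as the paper: your ``hinge'' is exactly the paper's key identity $\|\Psi'(w)\|=\|m(w)^+\|\,\|\Phi'(m(w))\|$, obtained from the orthogonal splitting $X=X(m(w))\oplus T_w(S^+)$ together with the vanishing of $\Phi'(m(w))$ on $X(m(w))=\mathbb{R}w\oplus X^-$. The remaining steps (specialising Proposition~\ref{prop2}, using $\delta\le\|m(w)^+\|\le C$ from $(A_3)$ and boundedness, and transporting infima via the bijection $m$) coincide with the paper's argument.
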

\begin{proof}
\item[(a)] This is a direct consequence of Proposition \ref{prop2}, since $m(w)=\widehat{m}(w)$ for $w\in S^+$.
\item[(b)] Let $(w)\subset S^+$ and let $u=m(w)\in\mathcal{M}$. We have an orthogonal decomposition
\begin{equation*}
    X=X(w)\oplus T_{w}(S^+)=X(u)\oplus T_{w}(S^+).
\end{equation*}
Using (a) we have
\begin{equation}\label{etoile}
     \|\Psi'(w)\|= \sup_{\substack{z\in T_{w}(S^+)\\ \|z\|=1}}\big<\Psi'(w),z\big>=\sup_{\substack{z\in T_{w}(S^+)\\ \|z\|=1}}\|u^+\|\big<\Phi'(m(w)),z\big> = \|u^+\|\|\Phi'(u)\|,
\end{equation}
where the last equality holds because $\big<\Phi'(u),v\big>=0$ for all $v\in X(w)$, and $T_{w}(S^+)$ is orthogonal to $X(u)$. By $(A_3)$, there is $\delta>0$ such that $\|u^+\|\geq\delta$. It is then easy to conclude.
\item[(c)] By \eqref{etoile}, $\Psi'(w)=0$ if and only if $\Phi'(m(w))=0$. The other part is clear. \medskip
\end{proof}

 %%%%%%%%%%%%%%%%%%%%%%%%%%%%%%%%%%%%%%%%%%%%%%%%%%%%%%%%%%%%%%%%%%%%%%
%%%%%%%%%%%%%%%%%%%%%%%%%%%%%%%%%%%%%%%%%%%%%%%%%%%%%%%%%%%%%%%%%%%%%%
\section{Proof of the main result}\label{section2}
%%%%%%%%%%%%%%%%%%%%%%%%%%%%%%%%%%%%%%%%%%%%%%%%%%%%%%%%%%%%%%%%%%%%%
%%%%%%%%%%%%%%%%%%%%%%%%%%%%%%%%%%%%%%%%%%%%%%%%%%%%%%%%%%%%%%%%%%%%%%
Let $X:=H_0^1(\Omega)\times H_0^1(\Omega)$ endowed with the norm
\begin{equation*}
    \|(a,b)\|=\big(\|\nabla a\|_{L^2(\Omega)}^2+\|\nabla b\|_{L^2(\Omega)}^2\big)^\frac{1}{2},
\end{equation*}
 which by the Poincar\'{e} inequality is equivalent to its usual norm. Define
\begin{equation*}
    X^+:=H_0^1(\Omega)\times\{0\} \,\,\, \textnormal{and} \,\,\, X^-:=\{0\}\times H_0^1(\Omega).
\end{equation*}
Then for $u=u^++u^-\in X$, we have
\begin{equation}\label{phi}
    \Phi(u)=\frac{1}{2}\|u^+\|^2-\frac{1}{2}\|u^-\|^2-I(u),
\end{equation}
where $I(u):=\int_\Omega F(x,u)dx.$ \\
We recall that for $u\in X$,
\begin{equation}\label{d}
    X(u):=\mathbb{R}u\oplus X^-\equiv \mathbb{R}u^+\oplus X^- \,\,\, \textnormal{and} \,\,\, \widehat{X}(u):=\mathbb{R}^+u\oplus X^-\equiv \mathbb{R}^+u^+\oplus X^-.
\end{equation}
\par By a standard argument we have:
\begin{lemma}
 Under $(F_1)-(F_2)$, $\Phi\in\mathcal{C}^1(X,\mathbb{R})$ and
\begin{equation}\label{phiprime}
    \big<\Phi'(u),v\big>=\int_\Omega\Big(\nabla u^+\cdot\nabla v^+-\nabla u^-\cdot\nabla v^-- v\cdot\nabla F(x,u)\Big).
\end{equation}
\end{lemma}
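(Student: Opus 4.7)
The plan is to split $\Phi$ as $\Phi = \Phi_0 - I$, where $\Phi_0(u) := \tfrac{1}{2}\|u^+\|^2 - \tfrac{1}{2}\|u^-\|^2$. Since $X^+$ and $X^-$ are orthogonal closed subspaces of the Hilbert space $X$, the quadratic form $\Phi_0$ is obviously of class $\mathcal{C}^1$ with
\[
\langle \Phi_0'(u), v\rangle = \int_\Omega \bigl(\nabla u^+\cdot\nabla v^+ - \nabla u^-\cdot\nabla v^-\bigr),
\]
which accounts for the first two terms in \eqref{phiprime}. All the real work is therefore to prove that $I : X \to \mathbb{R}$, $I(u) = \int_\Omega F(x,u)\,dx$, is $\mathcal{C}^1$ with
$\langle I'(u), v\rangle = \int_\Omega v\cdot\nabla F(x,u)\,dx.$

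First I would check that these integrals are well defined. From $(F_2)$ and $F(x,0)=0$ in $(F_1)$, integrating along a segment gives the pointwise bound $|F(x,u)| \leq a\bigl(|u| + \tfrac{1}{p}|u|^p\bigr)$, and $(F_2)$ itself gives $|\nabla F(x,u)| \leq a(1+|u|^{p-1})$. Since $p \in (2, 2^\star)$, the Sobolev embedding $H_0^1(\Omega) \hookrightarrow L^p(\Omega)$ (and hence $X \hookrightarrow L^p(\Omega)\times L^p(\Omega)$) is continuous, so $I(u)$ is finite and the candidate derivative $v \mapsto \int_\Omega v\cdot\nabla F(x,u)$ is a bounded linear functional on $X$ by Hölder (with exponents $p$ and $p' = p/(p-1)$, noting $p-1 < p/(p-1) \cdot (p-1)$ — more precisely $(|u|^{p-1}) \in L^{p'}$ because $(p-1)p' = p$).

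Next I would establish Gâteaux differentiability. Fix $u, v \in X$ and consider the difference quotient $t^{-1}\bigl(F(x, u(x) + t v(x)) - F(x, u(x))\bigr)$. By the mean value theorem it equals $v(x)\cdot \nabla F(x, u(x) + \theta(x,t) v(x))$ for some $\theta(x,t) \in (0,1)$, and by $(F_2)$ is dominated, for $|t|\leq 1$, by $a|v|(1 + (|u|+|v|)^{p-1})$, which lies in $L^1(\Omega)$ by the Sobolev embedding and Hölder. Lebesgue dominated convergence then yields the formula for $\langle I'(u), v\rangle$.

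Finally, the main (though still routine) step is continuity of $u \mapsto I'(u)$ from $X$ to $X^*$. If $u_n \to u$ in $X$, then $u_n \to u$ in $L^p(\Omega)\times L^p(\Omega)$ by the Sobolev embedding, and the Nemytskii operator $u \mapsto \nabla F(\cdot, u)$ is continuous from $L^p(\Omega)\times L^p(\Omega)$ into $L^{p'}(\Omega)\times L^{p'}(\Omega)$ under the growth condition $(F_2)$ (this is the standard Krasnoselskii--Nemytskii result, which I would just cite). Hence
\[
\|I'(u_n) - I'(u)\|_{X^*} \leq C\,\bigl\|\nabla F(\cdot, u_n) - \nabla F(\cdot, u)\bigr\|_{L^{p'}} \longrightarrow 0,
\]
where $C$ comes from the Sobolev embedding $X \hookrightarrow L^p\times L^p$. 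Combined with Gâteaux differentiability, this gives $I \in \mathcal{C}^1(X, \mathbb{R})$, hence $\Phi \in \mathcal{C}^1(X, \mathbb{R})$ with derivative \eqref{phiprime}. The only delicate point is verifying the exponent arithmetic $(p-1)p' = p$ needed to place $|u|^{p-1}$ in $L^{p'}$; everything else is bookkeeping with dominated convergence and the continuity of the Nemytskii operator.
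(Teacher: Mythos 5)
Your proof is correct and is precisely the ``standard argument'' that the paper invokes without writing out: the paper gives no proof of this lemma, and your decomposition $\Phi=\Phi_0-I$, the growth bound from $(F_2)$ with Sobolev embedding and H\"older (using $(p-1)p'=p$), the mean value theorem plus dominated convergence for G\^ateaux differentiability, and the continuity of the Nemytskii operator $u\mapsto\nabla F(\cdot,u)$ from $L^p\times L^p$ to $L^{p'}\times L^{p'}$ is exactly the classical route (cf.\ Theorem A.2 and Proposition 1.3 in Willem's book, which the paper cites elsewhere for the same purpose). No gaps.
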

Before giving the proof of the main theorem, we need some preliminary results.
\begin{lemma}\label{a1}
Assume $(F_1)$ and $(F_5)$. Then $(A_1)$ is satisfied.
\end{lemma}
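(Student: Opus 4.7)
The plan is to verify the three clauses of $(A_1)$ in turn: the normalization $I(0)=0$, the two-sided bound $\tfrac{1}{2}\langle I'(u),u\rangle > I(u) > 0$ for $u\neq 0$, and weak lower semicontinuity of $I$ on $X$.

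The first clause is immediate from $(F_1)$, which gives $F(x,0)=0$ and hence $I(0)=\int_\Omega F(x,0)\,dx=0$. For the two strict inequalities I would argue pointwise and then integrate. Condition $(F_5)$ supplies $F(x,\xi)>0$ and $\xi\cdot\nabla F(x,\xi) > 2F(x,\xi)$ for every $\xi\in\mathbb{R}^2\setminus\{0\}$, and at $\xi=0$ both quantities equal $0$. For any $u\in X\setminus\{0\}$, the $\mathbb{R}^2$-valued map $x\mapsto u(x)$ is nonzero on a set $E\subset\Omega$ of positive Lebesgue measure (otherwise $u^+$ and $u^-$ would both vanish in $H_0^1(\Omega)$). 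Integrating $F(x,u(x))\geq 0$ and using that the inequality is strict on $E$ yields $I(u)>0$. Using the formula $\langle I'(u),u\rangle = \int_\Omega u\cdot\nabla F(x,u)\,dx$ made available by Lemma~1, the same argument applied to $\tfrac{1}{2}u(x)\cdot\nabla F(x,u(x))\geq F(x,u(x))$ gives $\tfrac{1}{2}\langle I'(u),u\rangle > I(u)$.

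For the weak lower semicontinuity, I would take $u_n\rightharpoonup u$ in $X$ and select a subsequence along which $I(u_{n_k})\to\liminf_n I(u_n)$. Since $H_0^1(\Omega)$ embeds compactly into $L^2(\Omega)$ by Rellich--Kondrachov, a further subsequence can be extracted so that $u_{n_k}\to u$ a.e.\ in $\Omega$. Because $F$ is continuous and nonnegative, Fatou's lemma applies and gives $I(u)\leq \liminf_k I(u_{n_k}) = \liminf_n I(u_n)$, which is exactly the weak lower semicontinuity.

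The only mildly delicate point is the strictness: the inequality $\tfrac{1}{2}\xi\cdot\nabla F(x,\xi)\geq F(x,\xi)$ is an equality on the zero set of $u$, so the strict integral inequality depends on the observation that a nonzero element of $H_0^1(\Omega)\times H_0^1(\Omega)$ cannot vanish almost everywhere in $\Omega$. This is the main thing to pin down carefully; the rest of the argument is standard bookkeeping once Lemma~1 is in hand.
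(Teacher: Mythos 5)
Your proposal is correct and follows essentially the same route as the paper: the normalization and the two strict inequalities come pointwise from $(F_1)$ and $(F_5)$ (the paper treats this as immediate, while you spell out the positive-measure argument for strictness), and weak lower semicontinuity is obtained exactly as in the paper via Rellich--Kondrachov, a.e.\ convergence along a subsequence, and Fatou's lemma using $F\geq 0$. No substantive difference.
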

\begin{proof}
Clearly by $(F_1)$ and $(F_5)$ we have $I(0)=0$ and $\frac{1}{2}\big<I'(u),u\big>>I(u)>0$, $\forall u\neq0.$ Let $(u_n)\subset X$ and $c\in\mathbb{R}$ such that $u_n\rightharpoonup u$ and $I(u_n)\leq c$. By Rellich-Kondrachov theorem $u_n\rightarrow u$ in $L^2(\Omega)\times L^2(\Omega)$, and taking a subsequence if necessary we have $u_n(x)\rightarrow u(x)$ a.e on $\Omega$. Since $F$ is continuous, we conclude by applying Fatou's Lemma that $I$ is weakly lower semicontinuous. \medskip
\end{proof}

\begin{lemma}\label{a2}
Under $(F_1)$, $(F_3)-(F_8)$, $(A_2)$ is satisfied.
\end{lemma}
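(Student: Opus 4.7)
Fix $w \in X \setminus X^-$; by rescaling we may take $\|w\|=1$ and $w = (w_1, 0) \in X^+$. Parametrize $\widehat X(w)$ by $(s, v) \in [0,\infty)\times X^-$ via $u = sw + v$ and set
\[
    \psi(s, v) := \Phi(sw+v) = \tfrac{s^2}{2} - \tfrac12\|v\|^2 - \int_\Omega F(x, sw_1, v_2)\,dx.
\]
Standard arguments produce a positive interior maximum: on one hand $(F_2),(F_3)$ give $F(x, u) \leq \varepsilon|u|^2 + C_\varepsilon|u|^p$, so by Sobolev embedding $\psi(s, 0) > 0$ for some small $s > 0$; on the other, $(F_3),(F_4)$ yield $F(x, u) \geq M|u|^2 - C_M$ for arbitrary $M > 0$, whence
\[
    \psi(s,v) \leq \bigl(\tfrac12 - M\|w_1\|_{L^2}^2\bigr)s^2 - \tfrac12\|\nabla v_2\|_{L^2}^2 - M\|v_2\|_{L^2}^2 + C_M|\Omega|,
\]
which tends to $-\infty$ as $s^2 + \|v\|^2 \to \infty$ once $M$ is large enough. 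Combined with the weak upper-semicontinuity of $\psi$ (inherited from the weak lower-semicontinuity of $I$ in Lemma~\ref{a1}), $\psi$ attains its maximum at some $(s_0, v_0)$ with $\psi(s_0, v_0) > 0$, and positivity forces $s_0 > 0$ since $\psi|_{\{0\}\times X^-} \leq 0$. Thus $\widehat m(w) := s_0 w + v_0$ is a nontrivial interior critical point of $\Phi|_{\widehat X(w)}$ and a global maximum.

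For both uniqueness and the maximum property, I would establish the stronger statement that $\Phi(z) < \Phi(\widehat m(w))$ for every $z \in \widehat X(w) \setminus \{\widehat m(w)\}$. Writing $u := \widehat m(w)$, I first exclude $z \in X^-$ using $(F_5)$: a nontrivial critical point in $X^-$ would give $0 = \langle\Phi'(z),z\rangle = -\|z\|^2 - \int z\cdot\nabla F < 0$, a contradiction. So $z^+ \neq 0$ and we may write $z = tu + \eta$ with $t > 0$, $\eta \in X^-$, and $(t, \eta) \neq (1, 0)$. Using the criticality of $u$ to eliminate the linear terms, a direct expansion yields
\[
    \Phi(z) - \Phi(u) = -\tfrac12\|\eta\|^2 + \int_\Omega \mathcal{G}(x, u, t, \eta)\,dx,
\]
where
\[
    \mathcal{G}(x, a, t, b) := \tfrac{t^2-1}{2}\,a\cdot\nabla F(x,a) + t\,b\cdot\nabla F(x,a) + F(x,a) - F(x, ta+b).
\]

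The crux, and the main obstacle, is the pointwise inequality $\mathcal{G}(x, a, t, b) \leq 0$ for $a \in \mathbb{R}^2$, $t \geq 0$, $b = (0, b_2) \in \mathbb{R}^2$, with strict inequality whenever $a \neq 0$ and $(t, b) \neq (1, 0)$. Under the radial structure from $(F_7)$, $F(x, u) = h(x, |u|)$, and $(F_5)$ becomes strict monotonicity of $r \mapsto h(x,r)/r^2$ on $(0,\infty)$; a convexity estimate on $y \mapsto h(x, \sqrt{y})$ combined with sign information from $(F_6)$ then yields the non-strict inequality. The strictness is then teased out case by case: strict monotonicity in $(F_5)$ handles $b = 0$, $t \neq 1$; the strict comparison in $(F_7)$ handles $|ta+b| = |a|$ with $ta+b \neq a$; and $(F_8)$ is invoked for the residual configuration $|a| \neq |ta+b|$ with $a\cdot(ta+b) \neq 0$, where the other conditions would otherwise permit equality. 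Integrating then gives $\Phi(z) < \Phi(u)$ whenever $z \neq u$, proving simultaneously the uniqueness of the nontrivial critical point and the global-maximum property, which is $(A_2)$.
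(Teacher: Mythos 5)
Your overall architecture matches the paper's: part one (positivity of $\Phi$ near $0$ on $\mathbb{R}^+w$, coercivity on $\widehat{X}(w)$ from $(F_4)$, weak upper semicontinuity, hence an attained positive maximum with $s_0>0$) is correct, and your coercivity estimate via $F(x,u)\geq M|u|^2-C_M$ is in fact cleaner than the paper's contradiction-plus-Fatou argument. Likewise, your reduction of uniqueness and maximality to the pointwise inequality $\mathcal{G}(x,a,t,b)\leq 0$, using $\langle\Phi'(u),\zeta\rangle=0$ for $\zeta\in X(u)$ to eliminate the linear terms, reproduces exactly the paper's identity (with $t=1+s$, $b=v$).

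The gap is in the step you yourself call the crux. Writing $F(x,u)=h(x,|u|)$, $r=|a|$, $R=|ta+b|$ and $\beta(y):=h(x,\sqrt{y})$, a short computation gives
\[
\mathcal{G}=\beta'(r^2)\bigl(R^2-r^2\bigr)+\beta(r^2)-\beta(R^2)-\frac{h'(x,r)}{2r}\,|b|^2 ,
\]
so convexity of $\beta$ would indeed finish the non-strict inequality. But that convexity is not among $(F_1)-(F_8)$ and does not follow from them: $(F_5)$ only says $y\beta'(y)>\beta(y)$, i.e.\ $\beta(y)/y$ is strictly increasing, a first-order condition that imposes no sign on $\beta''$; it holds for the model $f(x)|u|^p$ but not for a general admissible $F$. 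Your case analysis for strictness also cannot be run as stated, because $(F_8)$ only produces information once the identity $z\cdot\nabla F(x,a)=a\cdot\nabla F(x,z)$ (with $z=ta+b$) is known to hold at the point under consideration. The paper supplies exactly this: it freezes $x,a,b$, regards $\mathcal{G}$ as a function of $t$ alone on $[0,\infty)$, notes $\mathcal{G}|_{t=0}<0$ by $(F_5)$ and $\mathcal{G}\to-\infty$ by $(F_4)$--$(F_5)$, and observes that at an interior maximum $\partial_t\mathcal{G}=0$ is precisely $z\cdot\nabla F(x,a)=a\cdot\nabla F(x,z)$; then $(F_6)$ disposes of the case $a\cdot z\leq 0$, while for $a\cdot z>0$, $(F_8)$ forces $|a|=|z|$ and $(F_7)$ yields $F(x,a)=F(x,z)$ and $z\cdot\nabla F(x,a)<a\cdot\nabla F(x,a)$ for $z\neq a$, whence $\mathcal{G}<-\frac{(t-1)^2}{2}a\cdot\nabla F(x,a)\leq 0$. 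Replacing your convexity claim by this maximization-in-$t$ argument closes the gap; as written, the central inequality is unproven.
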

\begin{proof}
\item[\textbf{(1)}] We first show that $\widehat{X}(w)\cap\mathcal{M}\neq\emptyset$ for any $w\in X\backslash X^-$.\\
              Let $w\in X\backslash X^-.$ Then $\Phi\leq0$ on $\widehat{X}(w)\backslash B_R$ for $R$ large enough, where $B_R:=\{u\in X\, |\, \|u\|\leq R\}$. In fact, if this is not true then there exists a sequence $(u_n)\subset\widehat{X}(w)$ such that $\|u_n\|\rightarrow\infty$ and $\Phi(u_n)>0$. Up to a subsequence we have $v_n=u_n/\|u_n\|\rightharpoonup v$.
              By \eqref{phi} we have
              \begin{equation*}
                0<\frac{\Phi(u_n)}{\|u_n\|^2}=\frac{1}{2}\|v_n^+\|^2-\frac{1}{2}\|v_n^-\|^2-\int_\Omega\frac{F(x,\|u_n\|v_n)}{\big|v_n\|u_n\|\big|^2}|v_n|^2.
              \end{equation*}
If $v\neq0$ we deduce, by using Fatou's Lemma and $(F_4)$, that $0\leq-\infty$; a contradiction. Consequently $v=0$. Since $\widehat{X}(w)=\widehat{X}(w^+/\|w^+\|)$, we may assume that $w\in S^+$. Now since $I(u_n)\geq0$ and $1=\|v_n^+\|^2+\|v_n^-\|^2$, then necessarily $v_n^+=s_nw\nrightarrow0$. Hence there is $r>0$ such that $\|v_n^+\|=\|s_nw\|>r$ $\forall n$. So $\|v_n^+\|=s_n$ is bounded and bounded away from $0$. But then, up to a subsequence, $v_n^+\rightarrow sw,$ $s>0$, which contradicts the fact that $v_n\rightharpoonup0$.\\
By $(F_3)$, $\Phi(sw)=\frac{1}{2}s^2+\circ(s^2)$ as $s\rightarrow0$. Hence $0<\sup_{\widehat{X}(w)}\Phi<\infty$. Since $\Phi$ is weakly upper semicontinuous on $\widehat{X}(w)$ and $\Phi\leq0$ on $\widehat{X}(w)\cap X^-$, the supremum  is attained at some point $u_0$ such that $u_0^+\neq0$. So $u_0$ is a nontrivial critical point of $\Phi|_{\widehat{X}(w)}$ and hence $u_0\in \mathcal{M}$.
\item[\textbf{(2)}] Now we show that if $u\in\mathcal{M}$, then $u$ is the unique global maximum of $\Phi|_{\widehat{X}(u)}$.\\
              Let $u\in\mathcal{M}$ and $u+w\in\widehat{X}(u)$ with $w\neq0$. By definition of $\widehat{X}(u)$ we have $u+w=(1+s)u+v$, $s\geq-1$ and $v\in X^-.$
              By using the fact that $s(\frac{s}{2}+1)u+(1+s)v\in X(u)$ we obtain
              \begin{multline*}
                \Phi(u+w)-\Phi(u)=-\frac{1}{2}\|v\|^2+\\
                 \int_\Omega\Big[\bigl(s(\frac{s}{2}+1)u+(1+s)v\bigr)\cdot\nabla F(x,u)+F(x,u)-F(x,u+w)\Big].
              \end{multline*}
              We define $g$ on $[-1,\infty[$ by
              \begin{equation*}
                g(s):=\big(s(\frac{s}{2}+1)u+(1+s)v\big)\cdot\nabla F(x,u)+F(x,u)-F(x,u+w).
              \end{equation*}
              Since $u\neq0$, then in view of $(F_5)$ we have $g(-1)<0$. On the other hand we deduce from $(F_4)$ and $(F_5)$ that $g(s)\rightarrow-\infty$ as $s\rightarrow\infty$. Assume that $g$ attains its maximum at a point $s\in[-1,\infty[$, then
              \begin{equation}\label{gprim}
                g'(s)=\big((1+s)u+v\big)\cdot\nabla F(x,u)-u\cdot\nabla F(x,(1+s)u+v)=0.
              \end{equation}
              Setting $z=u+w=(1+s)u+v$, one can easily verify that
              \begin{equation*}
                g(s)=-\big(\frac{s^2}{2}+s+1\big)u\cdot\nabla F(x,u)+(1+s)z\cdot\nabla F(x,u)+F(x,u)-F(x,z).
              \end{equation*}
              It is then clear that if $u\cdot z\leq0$, then $(F_6)$ implies $g(s)<0$. Suppose that $u\cdot z>0$, then in view of \eqref{gprim}, $(F_8)$ implies $|u|=|z|$ and  by $(F_7)$ we have $F(x,u)=F(x,z)$ and $z\cdot\nabla F(x,u)<u\cdot\nabla F(x,u)$ whenever $w\neq0$. This implies that $g(s)<-\frac{s^2}{2}u\cdot\nabla F(x,u)\leq0.$ Hence $\Phi(u+w)<\Phi(u)$. \medskip
        \end{proof}

\begin{lemma}\label{a3}
Assume $(F_2)-(F_8)$. Then $(A_3)$ is satisfied.
\end{lemma}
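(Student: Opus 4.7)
The plan is to prove the two assertions of $(A_3)$ in turn, making essential use of the maximality property $(A_2)$ established in Lemma \ref{a2}.

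For the lower bound $\|\widehat m(w)^+\|\geq\delta$, I first intend to show that $\Phi$ is bounded below on $\mathcal{M}$ by a strictly positive constant. Fix $u=\widehat m(w)\in\mathcal{M}$ and set $e:=w^+/\|w^+\|\in S^+$. By $(A_2)$, $\Phi(u)\geq \Phi(se)$ for every $s\geq 0$, since $se\in \widehat X(w)$. A standard consequence of $(F_2)$ and $(F_3)$ is the growth estimate $F(x,\xi)\leq \epsilon|\xi|^2+C_\epsilon|\xi|^p$, so integration and the Sobolev embedding yield $I(se)\leq \epsilon C s^2+C_\epsilon C' s^p$ with constants depending only on $\Omega$ and $p$. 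Since $(se)^-=0$ and $\|(se)^+\|=s$, this gives $\Phi(se)\geq (\tfrac{1}{2}-\epsilon C)s^2-C_\epsilon C' s^p$; choosing $\epsilon$ small and optimizing in $s\geq 0$ produces a constant $c_0>0$ such that $\Phi(u)\geq c_0$ for every $u\in \mathcal{M}$. Since $(F_5)$ gives $I(u)\geq 0$, one has $\Phi(u)\leq\tfrac{1}{2}\|u^+\|^2$, and the announced lower bound follows with $\delta:=\sqrt{2c_0}$.

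For the upper bound on a compact subset $\mathcal K\subset X\setminus X^-$, I argue by contradiction in the spirit of Step (1) of the proof of Lemma \ref{a2}. Assume $w_n\in\mathcal K$ with $u_n:=\widehat m(w_n)$ satisfying $\|u_n\|\to\infty$. By compactness a subsequence $w_n\to w\in\mathcal K$ in $X$, and $w\notin X^-$ forces $w^+\neq 0$. Writing $u_n=s_n w_n+v_n$ with $s_n\geq 0$ and $v_n\in X^-$, set $\tilde u_n:=u_n/\|u_n\|$, so $\tilde u_n^+=(s_n/\|u_n\|)w_n^+$. Since $\|u_n^+\|=s_n\|w_n^+\|\leq \|u_n\|$ and $\|w_n^+\|\to\|w^+\|>0$, the scalars $s_n/\|u_n\|$ are bounded; passing to a further subsequence they converge to some $\mu\geq 0$, and strong convergence $w_n^+\to w^+$ gives $\tilde u_n^+\to \mu w^+$ strongly. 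Extracting once more, $\tilde u_n\rightharpoonup\tilde u$ in $X$ with pointwise a.e.\ convergence. I then run the usual dichotomy, using $\Phi(u_n)>0$ (which holds because $u_n\in\mathcal{M}$): if $\tilde u\neq 0$, then on $\{\tilde u\neq 0\}$ one has $|u_n|\to\infty$, and Fatou's lemma applied to $F(x,u_n)/\|u_n\|^2=\bigl(F(x,u_n)/|u_n|^2\bigr)|\tilde u_n|^2$ together with $(F_4)$ gives $I(u_n)/\|u_n\|^2\to\infty$ and hence $\Phi(u_n)/\|u_n\|^2\to -\infty$, a contradiction; if $\tilde u=0$, then $\mu w^+=0$ forces $\mu=0$, so $\|\tilde u_n^+\|\to 0$, whereas $\Phi(u_n)>0$ combined with $I(u_n)\geq 0$ from $(F_5)$ gives $\|\tilde u_n^+\|^2>\|\tilde u_n^-\|^2$, so $\|\tilde u_n^+\|^2>1/2$, again a contradiction.

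The main obstacle is the first step, namely establishing the positive uniform lower bound for $\Phi$ on $\mathcal{M}$; once this is available the estimate $\|\widehat m(w)^+\|\geq\delta$ is immediate. The upper bound on compact sets is then a faithful adaptation of the argument already present in Step (1) of Lemma \ref{a2}, the new ingredient being that strong convergence of $w_n$ (from compactness of $\mathcal K$) is needed to identify the strong limit of $\tilde u_n^+$ and thereby close the case $\tilde u=0$ of the dichotomy.
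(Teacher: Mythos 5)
Your proof is correct and follows essentially the same route as the paper: the lower bound $\|\widehat m(w)^+\|\geq\delta$ comes from the mountain-pass geometry of $\Phi$ on $X^+$ (you via the standard estimate $F(x,\xi)\leq\epsilon|\xi|^2+C_\epsilon|\xi|^p$, the paper via $I'(u)=\circ(\|u\|)$) combined with the maximality in $(A_2)$ and $I\geq0$, and the upper bound on compact sets is the same contradiction argument using $\Phi(\widehat m(w_n))>0$, the strong convergence $w_n\to w$ with $w^+\neq0$, Fatou's lemma and $(F_4)$. The only cosmetic difference is that you normalize by $\|u_n\|$ and run an explicit dichotomy on the weak limit, whereas the paper reduces to $\mathcal K\subset S^+$ and normalizes by $\lambda_n$; both are equivalent.
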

\begin{proof}
Clearly $(F_3)$ implies $I'(u)=\circ(\|u\|)$ as $|u|\rightarrow0$, which together with $(A_1)$ imply that
\begin{equation*}
    \forall \varepsilon>0, \exists\alpha>0\, |\, \forall u\in X^+, |u|<\alpha\Rightarrow I(u)<\frac{1}{2}\big<I'(u),u\big>\leq \|I'(u)\|\|u\|\leq\frac{\varepsilon}{2}\|u\|^2.
\end{equation*}
Hence we can find $\rho,\eta>0$ such that $\Phi(w)\geq\eta$ for any $w\in\{u\in X^+\,|\, \|u\|=\rho\}$. By $(A_2)$, $\Phi(\widehat{m})\geq \eta$ for any $w\in X\backslash X^-$. Since $I\geq0$, we deduce from \eqref{phi} that $\|\widehat{m}(w)^+\|\geq\sqrt{2\eta}$ for any $w\in X\backslash X^-$.
\par Now let $\mathcal{K}$ be a compact subset of $X\backslash X^-$. We want to show that there exists a constant $C_\mathcal{K}$ such that $\|\widehat{m}(w)\|\leq C_\mathcal{K}$, $\forall w\in\mathcal{K}$. Since $\widehat{m}(w)=\widehat{m}(w^+/\|w^+\|)$ $\forall w\in X\backslash X^-$, we may assume that $\mathcal{K}\subset S^+$. Suppose by contradiction that there exists a sequence $(w_n)\subset \mathcal{K}$ such that $\|\widehat{m}(w_n)\|\rightarrow\infty.$ Since $\widehat{m}(w_n)\in \widehat{X}(w_n)$, we have $\widehat{m}(w_n)=\lambda_nw_n+v_n$, with $\lambda_n\geq0$ and $v_n\in X^-$. Since $\Phi(\widehat{m}(w_n))>0$, $\|w_n\|=1$ and $I\geq0$, we deduce from \eqref{phi} that $\lambda_n\geq\|v_n\|$. Hence $\lambda_n\rightarrow\infty$, which implies $|\lambda_nw_n+v_n|\rightarrow\infty$ as $n\rightarrow\infty$. By \eqref{phi} we have
\begin{align*}
% \nonumber to remove numbering (before each equation)
  0<\frac{\Phi(\widehat{m}(w_n))}{\lambda_n^2} &= \frac{1}{2}-\frac{1}{2}\frac{\|v_n\|^2}{\lambda_n^2}-\int_\Omega \frac{F(x,\lambda_nw_n+v_n)}{\lambda_n^2} \\
   &= \frac{1}{2}-\frac{1}{2}\frac{\|v_n\|^2}{\lambda_n^2}-\int_\Omega \frac{F(x,\lambda_nw_n+v_n)}{|\lambda_nw_n+v_n|^2}\frac{|\lambda_nw_n+v_n|^2}{\lambda_n^2} \\
   &\leq \frac{1}{2}-\int_\Omega \frac{F(x,\lambda_nw_n+v_n)}{|\lambda_nw_n+v_n|^2}|w_n|^2.\quad \quad \quad \quad \quad \quad \quad \quad \quad \quad(\star)
\end{align*}
Since $\mathcal{K}$ is compact we have, by taking a subsequence if necessary that $w_n\rightarrow w\in S^+$ and $w_n\rightarrow w$ a.e on $\Omega$. Clearly $w\neq0$. Then by using $(F_4)$ and Fatou's Lemma, we deduce from $(\star)$ that $0\leq-\infty$; a contradiction. \medskip
\end{proof}
We need the following result:
\begin{lemma}\label{lemmeboundG}
Let $1\leq q,r<\infty$ and $G\in\mathcal{C}(\overline{\Omega}\times\mathbb{R}\times\mathbb{R})$ such that
\begin{equation*}
    |G(x,a,b)|\leq c\bigl(1+|a|^{\frac{q}{r}}+|b|^{\frac{q}{r}}\bigr).
\end{equation*}
Then $\forall a,b\in L^q(\Omega)$, $G(\cdot,a,b)\in L^r(\Omega)$ and the operator $A:L^q(\Omega)\times L^q(\Omega)\rightarrow L^r(\Omega),(a,b)\mapsto G(x,a,b)$ is continuous.
\end{lemma}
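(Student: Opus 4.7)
The plan is to prove this via the standard Nemytskii (superposition) operator argument. First I would verify membership in $L^r$ directly from the growth condition: for any $a,b \in L^q(\Omega)$, raising the pointwise bound to the $r$-th power and using $(x+y+z)^r \leq 3^{r-1}(x^r+y^r+z^r)$ for $r\geq 1$ gives
\begin{equation*}
    |G(x,a(x),b(x))|^r \leq C\bigl(1 + |a(x)|^q + |b(x)|^q\bigr),
\end{equation*}
which is integrable on the bounded domain $\Omega$ since $a,b\in L^q(\Omega)$. Hence $G(\cdot,a,b)\in L^r(\Omega)$.

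For continuity, I would use the classical subsequence technique. Suppose $(a_n,b_n)\to(a,b)$ in $L^q(\Omega)\times L^q(\Omega)$; the goal is $G(\cdot,a_n,b_n)\to G(\cdot,a,b)$ in $L^r(\Omega)$. It suffices to show that every subsequence admits a further subsequence converging to this limit. Along any subsequence I would invoke the well-known consequence of the Riesz--Fischer proof: pass to a further subsequence $(a_{n_k},b_{n_k})$ along which $a_{n_k}(x)\to a(x)$ and $b_{n_k}(x)\to b(x)$ almost everywhere, and there exist dominating functions $h,g\in L^q(\Omega)$ with $|a_{n_k}(x)|\leq h(x)$ and $|b_{n_k}(x)|\leq g(x)$ a.e.

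By continuity of $G$ in its last two arguments, $G(x,a_{n_k}(x),b_{n_k}(x))\to G(x,a(x),b(x))$ a.e. on $\Omega$. The growth hypothesis yields the uniform domination
\begin{equation*}
    |G(x,a_{n_k}(x),b_{n_k}(x))|^r \leq C\bigl(1 + h(x)^q + g(x)^q\bigr) \in L^1(\Omega),
\end{equation*}
so Lebesgue's dominated convergence theorem gives $G(\cdot,a_{n_k},b_{n_k})\to G(\cdot,a,b)$ in $L^r(\Omega)$. Since the limit is the same along every such sub-subsequence, the full sequence converges, establishing continuity of $A$. I do not anticipate any real obstacle; the only delicate step is the extraction producing the $L^q$-dominating functions, which is entirely standard.
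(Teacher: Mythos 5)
Your proof is correct and is exactly the standard Nemytskii-operator argument; the paper itself omits the proof, referring to Theorem A.2 of Willem's \emph{Minimax Theorems}, whose proof proceeds by the same subsequence extraction with an $L^q$-dominating function followed by dominated convergence. No discrepancy to report.
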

The proof of Lemma \ref{lemmeboundG} follows the lines of the proof of Theorem $A.2$ in \cite{W} and is omitted here.
\begin{lemma}\label{pscondition}
Assume $(F_1)-(F_8)$. Then $\Phi$ satisfies the Palais-Smale condition on $\mathcal{M}$.
\end{lemma}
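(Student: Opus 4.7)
The plan is to verify the Palais--Smale condition on $\mathcal{M}$ by the usual three steps: show a PS sequence is bounded, extract a weakly convergent subsequence, and upgrade weak to strong convergence using the compactness of $u\mapsto\nabla F(\cdot,u)$ provided by Lemma~\ref{lemmeboundG}.

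A key preliminary observation is that $\mathcal{M}\subset X^+\setminus\{0\}$. Indeed, for $u\in\mathcal{M}$, testing $\big<\Phi'(u),v\big>=0$ against $v=u^-\in X^-$ gives $\|u^-\|^2=-\int_\Omega u^-\cdot\nabla F(x,u)$, while $(F_6)$ applied pointwise with $v=u^-$ (noting that $u\cdot u^-=|u^-|^2\geq0$) forces $u^-\cdot\nabla F(x,u)\geq 0$ a.e., so that $\|u^-\|^2\leq 0$ and $u^-=0$. This eliminates the genuinely indefinite scenario from the boundedness argument.

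For boundedness, let $(u_n)\subset\mathcal{M}$ be a PS sequence with $\|u_n\|\to\infty$ (seeking a contradiction), and set $w_n:=u_n/\|u_n\|$. Passing to a subsequence, $w_n\rightharpoonup w$ in $X$ and $w_n\to w$ in $L^p\times L^p$. If $w\neq 0$, hypothesis $(F_4)$ together with Fatou's lemma yields $I(u_n)/\|u_n\|^2\to\infty$, contradicting the upper bound $I(u_n)/\|u_n\|^2\leq\frac{1}{2}+o(1)$ obtained from $\Phi(u_n)/\|u_n\|^2\to 0$. If $w=0$, the reduction $u_n\in X^+$ gives $e_n:=u_n^+/\|u_n^+\|=w_n\to 0$ in $L^p$, and the maximality property $(A_2)$ yields $\Phi(u_n)\geq \frac{1}{2}s^2 - I(se_n)$ for every $s\geq 0$. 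Lemma~\ref{lemmeboundG} gives $I(se_n)\to 0$ for each fixed $s$, so letting first $n\to\infty$ and then $s\to\infty$ contradicts $\Phi(u_n)\leq c$.

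Once boundedness is in hand, I extract $u_n\rightharpoonup u$ in $X$ with $u_n\to u$ in $L^p\times L^p$. From $\Phi'(u_n)\to 0$ and weak convergence of $u_n-u$, one has $\big<\Phi'(u_n)-\Phi'(u),u_n-u\big>\to 0$, which by \eqref{phiprime} equals
\begin{equation*}
\|(u_n-u)^+\|^2 - \|(u_n-u)^-\|^2 - \int_\Omega(u_n-u)\cdot\bigl(\nabla F(x,u_n)-\nabla F(x,u)\bigr).
\end{equation*}
The integral tends to $0$ via Lemma~\ref{lemmeboundG} and H\"older's inequality; since $X^+$ is weakly closed and $u_n\in X^+$, the limit $u$ also lies in $X^+$, so the $X^-$-component of $u_n-u$ vanishes and $\|u_n-u\|^2\to 0$. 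I expect the main obstacle to be the $w=0$ case in the boundedness step: the strongly indefinite structure of $\Phi$ would normally forbid a direct Fatou argument there, and the reduction $\mathcal{M}\subset X^+$ afforded by $(F_6)$ is precisely what makes the maximality test $\Phi(u_n)\geq \frac{1}{2}s^2-I(se_n)$ effective by forcing $e_n\to 0$ in $L^p$.
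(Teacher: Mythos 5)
Your proof is correct, and while its overall skeleton (boundedness of a PS sequence via the normalized sequence, Fatou's lemma with $(F_4)$ in the case $w\neq0$, the maximality test $\Phi(u_n)\geq\Phi(se_n)$ in the case $w=0$, and the standard identity for upgrading weak to strong convergence) coincides with the paper's, you insert one genuinely different structural ingredient that the paper does not use: the observation that $(F_6)$ forces $\mathcal{M}\subset X^+\setminus\{0\}$. Testing $\big<\Phi'(u),u^-\big>=0$ and noting that $u\cdot u^-=|u^-|^2\geq0$ pointwise indeed yields $\|u^-\|^2=-\int_\Omega u^-\cdot\nabla F(x,u)\leq0$, hence $u^-=0$; this is a correct and rather striking consequence of the noncooperative structure (in the model case $F=f(x)|u|^p$ every solution has vanishing second component, so the claim is consistent). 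The observation buys you two simplifications: in the $w=0$ case you get $\|w_n^+\|=1$ for free, whereas the paper must argue that $\|v_n^+\|\geq\|v_n^-\|$ (from $\Phi(u_n)>0$ and $I>0$) and hence $\|v_n^+\|\geq\alpha>0$ along a subsequence before applying the maximality test to $sv_n^+$; and in the final step the troublesome term $-\|(u_n-u)^-\|^2$ disappears because $u\in X^+$ by weak closedness of $X^+$, whereas the paper treats the two components separately via $\|u_n^\pm-u^\pm\|^2=\pm\big<\Phi'(u_n)-\Phi'(u),u_n^\pm-u^\pm\big>\pm\int_\Omega(u_n^\pm-u^\pm)\cdot\big(\nabla F(x,u_n)-\nabla F(x,u)\big)$. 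The paper's route is marginally more robust, since it never needs $\mathcal{M}\subset X^+$ and would survive a weakening of $(F_6)$, while yours exposes more of the actual structure of $\mathcal{M}$ under the stated hypotheses. The only points worth making explicit in a final write-up are that $F\geq0$ (from $(F_1)$ and $(F_5)$) is what licenses Fatou's lemma in the $w\neq0$ case, and that one passes to a further subsequence to obtain $w_n\to w$ almost everywhere.
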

\begin{proof}
Let $(u_n)\subset \mathcal{M}$ be a sequence such that $\Phi(u_n)\leq d$ for some $d>0$ and $\Phi'(u_n)\rightarrow0$. We want to show that $(u_n)$ has a convergent subsequence.
\par Let us first show that $(u_n)$ is bounded.\\
If $(u_n)$ is not bounded, then up to a subsequence we have $\|u_n\|\rightarrow\infty.$ Define $v_n:=u_n/\|u_n\|$. We easily deduce from \eqref{phi} that
\begin{align*}
% \nonumber to remove numbering (before each equation)
  0<\frac{\Phi(u_n)}{\|u_n\|^2} &= \frac{1}{2}\|v_n^+\|^2-\frac{1}{2}\|v_n^-\|^2-\frac{I(u_n)}{\|u_n\|^2} \\
   &=  \frac{1}{2}\|v_n^+\|^2-\frac{1}{2}\|v_n^-\|^2-\int_\Omega |v_n|^2 \frac{F(x,v_n\|u_n\|)}{\big|v_n\|u_n\|\big|^2}. \quad \quad \quad (\star \star)
\end{align*}
Since $(v_n)$ is bounded we have, by taking a subsequence if necessary, $v_n\rightharpoonup v$.  If $v\neq0$, then by using one more time $(F_4)$ and Fatou's Lemma we obtain from $(\star \star)$ the contradiction $0\leq-\infty$. Hence $v=0$. Since $\Phi(u_n)>0$ and $I(u_n)>0$, \eqref{phi} implies $\|v_n^+\|\geq \|v_n^-\|$. Hence we cannot have $v_n^+\rightarrow0$ (since $\|v_n\|=1$). There then exists $\alpha>0$ such that, up to a subsequence, $\|v_n^+\|\geq\alpha$ $\forall n$. It is clear that $sv_n^+\in \widehat{X}(u_n)$ $\forall s>0$. Then by $(A_2)$ we have $d\geq \Phi(u_n)\geq\Phi(sv_n^+)\geq \frac{1}{2}s^2\alpha^2-I(sv_n^+)$ $\forall s>0$. Since $v_n^+\rightharpoonup0,$ we deduce from the compactness of the embedding $X\hookrightarrow L^p(\Omega)\times L^p(\Omega)$ that $v_n^+\rightarrow0$ in $L^p(\Omega)\times L^p(\Omega)$. Now since by $(F_2)$ $F$ satisfies the conditions of Lemma \ref{lemmeboundG} (with $q=p$ and $r=1$), we deduce that  $I(sv_n^+)\rightarrow 0$. It then follows that $d\geq\frac{1}{2}s^2\alpha^2$ $\forall s>0$. This gives another contradiction if we take $s$ big enough. Hence $(u_n)$ is bounded.
\par By taking a subsequence if necessary we have $u_n\rightharpoonup u$ in $X$. It follows from the compactness of the embedding $X\hookrightarrow L^p(\Omega)\times L^p(\Omega)$ that $u_n\rightarrow u$ in $L^p(\Omega)\times L^p(\Omega)$.
 Now we easily obtain from \eqref{phi} and \eqref{phiprime}:
\begin{equation*}
    \|u_n^\pm-u^\pm\|^2=\pm\big<\Phi'(u_n)-\Phi'(u),u_n^\pm-u^\pm\big>\pm\int_\Omega (u_n^\pm-u^\pm)\cdot\big(\nabla F(x,u_n)-\nabla F(x,u)\big).
\end{equation*}
Clearly $\big<\Phi'(u_n)-\Phi'(u),u_n^\pm-u^\pm\big>\rightarrow 0$. By $(F_2)$ the components of $\nabla F$ satisfy the conditions of Lemma \ref{lemmeboundG} with $q=p-1$ and $r=\frac{p}{p-1}$, then by using the H\"{o}lder inequality and Lemma \ref{lemmeboundG} we obtain
$\int_\Omega (u_n^\pm-u^\pm)\cdot(\nabla F(x,u_n)-\nabla F(x,u)\big)\rightarrow 0$. Consequently $u_n\rightarrow u$. \medskip
\end{proof}

We also need the following consequence of the Ekeland variational principle:
\begin{lemma}[\cite{W}, Corollary $2.5$]\label{ekeland}
Let $E$ be a Banach space and let $\varphi\in\mathcal{C}^1(E,\mathbb{R})$ be bounded below. If $\varphi$ satisfies the Palais-Smale condition at level $\theta:=\inf_E \varphi,$ then there exists $x\in E$ such that $\varphi'(x)=0$ and $\theta=\varphi(x)$.
\end{lemma}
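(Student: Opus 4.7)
The plan is to invoke the Ekeland variational principle on the whole Banach space $E$ to produce a Palais-Smale sequence at the infimum level $\theta$, and then use the hypothesis to pass to a convergent subsequence whose limit is automatically a minimizer and a critical point.

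First I would, for each integer $n\geq 1$, pick an approximate minimizer $y_n\in E$ with $\varphi(y_n)\leq \theta+\tfrac{1}{n^2}$ (possible since $\theta=\inf_E\varphi$). Then I would apply the Ekeland variational principle with $\varepsilon=\tfrac{1}{n^2}$ and $\lambda=\tfrac{1}{n}$ to obtain $x_n\in E$ satisfying
\begin{equation*}
\varphi(x_n)\leq \varphi(y_n),\qquad \|x_n-y_n\|\leq \tfrac{1}{n},\qquad \varphi(w)\geq \varphi(x_n)-\tfrac{1}{n}\|w-x_n\|\ \ \forall w\in E.
\end{equation*}
The first inequality immediately gives $\theta\leq \varphi(x_n)\leq \theta+\tfrac{1}{n^2}$, so $\varphi(x_n)\to \theta$.

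Next I would upgrade the third, variational, inequality into the bound $\|\varphi'(x_n)\|\to 0$. For any unit vector $h\in E$ and any $t>0$, setting $w=x_n+th$ yields
\begin{equation*}
\frac{\varphi(x_n+th)-\varphi(x_n)}{t}\geq -\frac{1}{n}.
\end{equation*}
Letting $t\to 0^+$ and using $\varphi\in\mathcal{C}^1$, one gets $\langle \varphi'(x_n),h\rangle\geq -\tfrac{1}{n}$; replacing $h$ by $-h$ gives the matching upper bound, hence $|\langle \varphi'(x_n),h\rangle|\leq \tfrac{1}{n}$ for every unit $h$, so $\|\varphi'(x_n)\|\leq \tfrac{1}{n}\to 0$. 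Thus $(x_n)$ is a Palais-Smale sequence for $\varphi$ at level $\theta$.

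Finally I would invoke the hypothesis: since $\varphi$ satisfies the Palais-Smale condition at level $\theta$, a subsequence $x_{n_k}$ converges to some $x\in E$. By continuity of $\varphi$ and of $\varphi'$, we conclude $\varphi(x)=\lim_k \varphi(x_{n_k})=\theta$ and $\varphi'(x)=\lim_k \varphi'(x_{n_k})=0$, which is what is required. The only non-routine input is the Ekeland principle itself, which I would quote; the main (mild) obstacle is the standard passage from the metric variational inequality in Ekeland's conclusion to the differential statement $\|\varphi'(x_n)\|\to 0$, handled by the one-sided directional derivative argument sketched above.
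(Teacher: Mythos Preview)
Your argument is correct and is the standard proof via the Ekeland variational principle. Note, however, that the paper does not give its own proof of this lemma at all: it simply quotes the result as Corollary~2.5 of \cite{W} and uses it as a black box, so there is nothing in the paper to compare your approach against.
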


\begin{proof}[Proof of Theorem \ref{maintheorem}]
We already know from Lemmas \ref{a1}, \ref{a2} and \ref{a3} that $(A_1)$, $(A_2)$ and $(A_3)$ are satisfied. By Corollary \ref{reduction}-$(a)$  $\Psi\in C^1(S^+,\mathbb{R})$.
\par Let us show that $\Psi$ satisfies the Palais-Smale condition on $S^+$.\\
Let $(w_n)\subset S^+$ be a Palais-Smale sequence for $\Psi$. By Corollary \ref{reduction}-$(b)$ $(m(w_n))$ is a Palais-Smale sequence for $\Phi$ on $\mathcal{M}$. By Lemma \ref{pscondition} we have $m(w_n)\rightarrow w$ up to a subsequence. Since $m^{-1}$ is continuous, it follows that $w_n\rightarrow m^{-1}(w)$. Hence $\Psi$ satisfies the Palais-Smale condition on $S^+$. Particularly $\Psi$ satisfies the Palais-Smale condition at level $\theta=\inf_{S^+}\Psi$. By Corollary \ref{reduction}-$(c)$ $\inf_{S^+}\Psi=\inf_{\mathcal{M}}\Phi>0$ and $\Psi$ is bounded below. By Lemma \ref{ekeland} $\inf_{S^+}\Psi$ is a critical value of $\Psi$. There then exists $u_0\in S^+$ such that $\inf_{S^+}\Psi=\Psi(u_0)$ and $\Psi'(u_0)=0$. It follows from Corollary \ref{reduction}-$(c)$ that $m(u_0)$ is a critical point of $\Phi$ and $\Phi(m(u_0))=\inf_\mathcal{M}\Phi$. Hence $m(u_0)$ is a ground state solution for the equation $\Phi'(u)=0$. \medskip
\end{proof}

%%%%%%%%%%%%%%%%%%%%%%%%%%%%%%%%%%%%%%%%%%%%%%%%%%%
%\section*{Acknowledgement}
%The author is grateful to the referees for a number of helpful comments for improvement in this article.
%%%%%%%%%%%%%%%%%%%%%%%%%%%%%%%%%%%%%%%%%%%%%%%%%%%
%


\begin{thebibliography}{20}
\bibitem{Chen-Ma}
{\sc G. Chen, S. Ma},
   {\it Periodic solutions for Hamiltonian systems without Ambrosetti-Rabinowitz condition and spectrum 0},
   J. Math. Anal. Appl. \textbf{379} (2011), no. 2, 842-851.
\bibitem{K}
{\sc M. K. Kwong},
  {\it A survey of results on the ground state of semilinear elliptic equations},
   Third International Congress of Chinese Mathematicians. Part 1, 2, 377–384, AMS/IP Stud. Adv. Math., 42, pt. 1, 2, Amer. Math. Soc., Providence, RI, 2008.
\bibitem{Mo}
{\sc A. Mohammed},
  {\it Ground state solutions for singular semi-linear elliptic equations},
  Nonlinear Anal., {\bf 71} (2009), no. 3-4, 1276-1280.
\bibitem{Pan}
{\sc A. Pankov},
  {\it Periodic nonlinear Schr\"{o}dinger equation with application to photonic crystals},
   Milan J. Math., {\bf 73} (2005)
259-287.
\bibitem{R}
{\sc P.H. Rabinowitz},
  {\it Minimax Methods in Critical Point Theory with Applications to Differential Equations}. CBMS
Reg. Conf. Ser. Math., vol. 65, Amer. Math. Soc., Providence, RI, 1986.
\bibitem{Sch}
{\sc M. Schechter},
  {\it Superlinear Schr\"{o}dinger operators}, J. Funct. Anal. {\bf 262} (2012), no. 6, 2677-2694.
\bibitem{S-W}{\sc A. Szulkin, T. Weth},
  {\it The method of Nehari manifold. Handbook of nonconvex analysis and applications}, 597–632, Int. Press, Somerville, MA, 2010.
 \bibitem{W}
 {\sc M. Willem},
   {\it Minimax Theorems},
    Birkhauser, Boston (1996).
\bibitem{Yang}{\sc M. Yang},
  {\it Ground state solutions for a periodic Schr\"{o}dinger equation with superlinear nonlinearities},
  Nonlinear Anal. {\bf 72} (2010), no. 5, 2620-2627.
\bibitem{Zhao}
{\sc F. Zhao, L. Zhao, Y. Ding},
  {\it Multiple solutions for a superlinear and periodic elliptic system on $\mathbb{R}^N$},
   Z. Angew. Math. Phys. {\bf 62} (2011), no. 3, 495-511.
\end{thebibliography}
\end{document}